\newcommand\Kbar{\overline{K}}
\newcommand\frakt{\mathfrak{t}}
\newcommand\frakg{\mathfrak{g}}
\newcommand\A{\mathcal{A}}
\newcommand\OO{\mathcal{O}}
\newcommand\CC{\mathbb{C}}
\newcommand\TT{\mathcal{T}}
\newcommand\GG{\mathcal{G}}
\newcommand\ZZ{\mathbb{Z}}
\newcommand\PP{\mathbb{P}}
\DeclareMathOperator{\Pic}{Pic}
\DeclareMathOperator{\GL}{GL}
\DeclareMathOperator{\Aut}{Aut}
\DeclareMathOperator{\Hom}{Hom}
\DeclareMathOperator{\Gal}{Gal}
\DeclareMathOperator{\Ad}{Ad}
\DeclareMathOperator{\Res}{Res}
\DeclareMathOperator{\Lie}{Lie}
\newcommand\Gm{\mathbb{G}_\mathrm{m}}
\newcommand\GmK{\mathbb{G}_{\mathrm{m},K}}
\newcommand\GmL{\mathbb{G}_{\mathrm{m},L}}
\newcommand\id{\mathrm{id}}
\newcommand\op{\mathrm{op}}
\newcommand\conj{\mathrm{conj}}
\newtheorem{theorem}{Theorem}[section]
\newtheorem*{thm}{Theorem}
\newtheorem{lemma}[theorem]{Lemma}
\newtheorem{proposition}[theorem]{Proposition}
\newtheorem{cor}[theorem]{Corollary}
\theoremstyle{definition}
\newtheorem{definition}[theorem]{Definition}
\newtheorem{remark}[theorem]{Remark}
\begin{document}

\title
[$G$-torsors and universal torsors over del Pezzo surfaces]
{$G$-torsors and universal torsors\\ over nonsplit del Pezzo surfaces}

\author{Ulrich Derenthal}

\address{Institut f\"ur Algebra, Zahlentheorie und Diskrete
  Mathematik, Leibniz Universit\"at Hannover, Welfengarten 1, 30167
  Hannover, Germany}

\address{School of Mathematics, Institute for Advanced Study, 1 Einstein Drive, Princeton, New Jersey, 08540, USA}
 
\email{derenthal@math.uni-hannover.de}

\author{Norbert Hoffmann}

\address{Department of Mathematics and Computer Studies, Mary
  Immaculate College, South Circular Road, Limerick, Ireland}

\email{norbert.hoffmann@mic.ul.ie}

\date{\today}

\subjclass[2020]{14J26 (14L30, 11E57, 14G27)}

\keywords{Universal torsor, del Pezzo surface, reductive group}

\begin{abstract}
  Let $S$ be a smooth del Pezzo surface that is defined over a field $K$ and
  splits over a Galois extension $L$. Let $G$ be either the split reductive
  group given by the root system of $S_L$ in $\Pic S_L$, or a form of it
  containing the N\'eron--Severi torus. Let $\GG$ be the $G$-torsor over $S_L$
  obtained by extension of structure group from a universal torsor $\TT$ over
  $S_L$.  We prove that $\GG$ does not descend to $S$ unless $\TT$ does. This
  is in contrast to a result of Friedman and Morgan that such $\GG$ always
  descend to singular del Pezzo surfaces over $\CC$ from their
  desingularizations.
\end{abstract}

\maketitle

\section{Introduction}\label{sec:intro}

One of the most famous theorems of 19th century algebraic geometry
states that cubic surfaces over $\CC$ contain precisely $27$
lines. The symmetry group of their configuration is the Weyl group $W$
of the root system $\mathbf{E}_6$.

Cubic surfaces over fields $K$ that are not algebraically closed exhibit a
substantially wider range of phenomena.  Given such a cubic surface $S$ over
$K$, there is a finite Galois extension $L/K$ over which $S$ splits (i.e.,
over which all $27$ lines are defined). Its Galois group $\Gamma$ acts on the
configuration of lines via $W$. This action plays a major role in the geometry
of $S$.

An important tool in the study of such cubic surfaces $S$ are the
universal torsors introduced and studied by Colliot-Th\'el\`ene and
Sansuc \cite{MR89f:11082}. These are certain torsors over $S$ under
the N\'eron--Severi torus.  Such universal torsors do not always exist
in the nonsplit case, and they do not descend to singular cubic
surfaces from their desingularizations. In particular, this does not
provide degenerations of universal torsors to singular cubic surfaces.

Friedman and Morgan suggested to replace the N\'eron--Severi torus here by a
reductive algebraic group $G$ of type $\mathbf{E}_6$. They construct natural
$G$-torsors over singular cubic surfaces over $\CC$ \cite{MR1941576}.  These
$G$-torsors are compatible with degenerations of split cubic surfaces
\cite{MR4188698}.

In this paper, we deal with the existence of natural $G$-torsors over nonsplit
smooth cubic surfaces.  Here, the natural candidates for $G$ are a certain split
group of type $\mathbf{E}_6$, and forms of it that contain the N\'eron--Severi
torus, because both reflect the combinatorial information coming from
$S$. Motivated by the result of Friedman and Morgan, there has been a hope
that such $G$-torsors could exist also in situations where universal torsors
do not exist.  However, the main result of the present paper is that such
$G$-torsors do not exist unless universal torsors exist, and even if they
exist, they are not more canonical than universal torsors.

\medskip

More generally, let $S$ be a smooth del Pezzo surface over a field
$K$.  By \cite[Theorem~1.6]{MR3114932}, there is a finite Galois
extension $L/K$ such that the base change $S_L$ is split.  Then the
Galois group $\Gamma=\Gal(L/K)$ acts on $\Lambda_S:=\Pic S_L$.

Manin \cite[\S 25]{MR833513} discovered that the Picard group $\Lambda_S$ of the
split del Pezzo surface $S_L$ together with the intersection form
$(\_\,,\_)$ provide us with a reduced root datum
$(\Phi_S \subset \Lambda_S,\Phi_S^\vee \subset \Lambda_S^\vee)$ given as follows: The set of
roots is
\begin{equation}\label{eq:Phi}
  \Phi_S := \{ \alpha \in \Lambda_S \mid (\alpha,\alpha)=-2,\ (\alpha,-K_{S_L})=0\}.
\end{equation}
With $\alpha^\vee \in \Lambda_S^\vee$ given by
$\langle\alpha^\vee,\lambda\rangle := (-\alpha,\lambda)$ for
$\lambda \in \Lambda_S$, the set of coroots is
\begin{equation*}
  \Phi_S^\vee:= \{\alpha^\vee \in \Lambda_S^\vee \mid \alpha \in \Phi_S\}.
\end{equation*}
Its Weyl group $W_S \subset \Aut\Lambda_S$ is the symmetry group of the configuration of
$(-1)$-curves on $S_L$.

By a $\Gamma$-twisted root datum, we mean a root datum
$(\Phi \subset \Lambda, \Phi^\vee \subset \Lambda^\vee)$ together with
a left action $\Gamma \times \Lambda \to \Lambda$ such that
$\gamma(\Phi) \subset \Phi$ and
$\gamma^*(\Phi^\vee) \subset \Phi^\vee$ for all $\gamma \in \Gamma$;
for example, each reductive algebraic group over $K$ that splits over
$L$ yields a $\Gamma$-twisted root datum (cf.\ \cite[Expos\'e XXII,
D\'efinition~1.9 and Proposition~1.10]{SGA3}, and
\cite{MR3260846} for a discussion of the converse direction). Let
\begin{equation*}
  (\Phi_S \subset \Lambda_S,\Phi_S^\vee \subset \Lambda_S^\vee,
  \Gamma \times \Lambda_S \to \Lambda_S)
\end{equation*}
be the $\Gamma$-twisted root datum given by
$(\Phi_S \subset \Lambda_S,\Phi_S^\vee \subset \Lambda_S^\vee)$
endowed with the action $\Gamma \times \Lambda_S \to \Lambda_S$ coming
from the fact that $S$ is defined over $K$.

The N\'eron--Severi torus for $S_L$ is the split $L$-torus $T$ with character
lattice $\Hom(T,\GmL) = \Lambda_S$ as abelian groups.  A universal torsor for
$S_L$ is a $T$-torsor $\TT$ over $S_L$ whose type $\Hom(T,\GmL) \to \Pic S_L$
(in the sense of \cite{MR89f:11082}, see Section~\ref{sec:split}) is the
identity on $\Lambda_S$. Such a $\TT$ exists and is unique up to isomorphism.

The N\'eron--Severi torus for $S$ is the $K$-torus $T$ that splits
over $L$ with character lattice $\Hom(T_L,\GmL) = \Lambda_S$ as
$\Gamma$-modules. A universal torsor for $S$ is a $T$-torsor $\TT$
over $S$ whose base change $\TT_L$ is a universal torsor for
$S_L$. Such a $\TT$ exists if $S(K) \ne \emptyset$
\cite[Remarque~2.2.9]{MR89f:11082}, but otherwise may or may not exist
\cite[Exemples~2.2.11 and 2.2.12]{MR89f:11082}.  For example, the
blow-up of a Severi--Brauer surface $B$ of index $3$ in an effective
$0$-cycle of degree $6$ is a cubic surface $S$ without a universal
torsor, since the relevant elementary obstructions
\cite[D\'efinition~2.2.1 and Proposition~2.2.8(iii)]{MR89f:11082} for
$B$ and for $S$ coincide.  If universal torsors exist, they may or may
not be unique up to isomorphism \cite[(2.0.2)]{MR89f:11082}.

In this situation, our main result on the existence and classification of such
$G$-torsors on del Pezzo surfaces $S$ is:

\begin{thm}
  Let $G$ be a reductive group over $K$ with maximal torus
  $\iota: T \hookrightarrow G$ such that
  \begin{itemize}
  \item[(i)] $T$ is split over $K$ with character lattice
    $\Hom(T,\GmK)=\Lambda_S$, and the root datum given by
    $G \supset T$ is
    $(\Phi_S \subset \Lambda_S,\Phi_S^\vee \subset \Lambda_S^\vee)$,
    or
  \item[(ii)] $T$ is the N\'eron--Severi torus for $S$, and the
    $\Gamma$-twisted root datum given by $G \supset T$ is 
    $(\Phi_S \subset \Lambda_S,\Phi_S^\vee \subset \Lambda_S^\vee,
    \Gamma \times \Lambda_S \to \Lambda_S)$.
  \end{itemize}
  
  Let $\TT$ be a universal $T_L$-torsor over $S_L$, and let
  $\GG:=(\iota_L)_*\TT$ be the $G_L$-torsor over $S_L$ obtained by extension
  of structure group.
  Then the groupoid of $G$-torsors $\GG^\circ$ over $S$ such that
  $\GG^\circ_L \cong \GG$ is equivalent to the
  groupoid of universal torsors over $S$.
\end{thm}

In particular, $\GG$ descends to a $G$-torsor over $S$ if and only if
universal torsors for $S$ exist; if this is the case, then the $K$-forms of
$\GG$ are in bijection to universal torsors.  Part (ii) of this theorem is
contained in Theorem~\ref{thm:main_category} below, and part (i) is contained
in its Corollary~\ref{cor:main_split}.

The proof of Theorem~\ref{thm:main_category} is based on descent from the
split situation. It turns out that the relevant $T$-torsors and $G$-torsors
have the same descent data because they have the same group of global
automorphisms, essentially by Proposition~\ref{prop:IsomT_IsomG},
which might be of independent interest since it applies to torsors under
general reductive groups $G$ over smooth projective schemes.
Corollary~\ref{cor:main_split} then follows using that the split group $G$
over $K$ contains a N\'eron--Severi torus, a result due independently to Gille
\cite{MR2139505} and Raghunathan \cite{MR2125504}.

For another connection between the del Pezzo surface $S$ and this reductive
group $G$ via universal torsors, see \cite{MR2368955,MR2976314} over
algebraically closed fields, and \cite[Theorem~4.4]{MR2576802} over nonclosed fields.
Regarding the Hasse principle for the existence of rational points and weak
approximation, torsors under connected linear algebraic groups $G$ do not
provide finer obstructions than the Brauer--Manin obstruction
\cite[Th\'eor\`eme~2]{MR1905103}.

The structure of our paper is as follows. In Section~\ref{sec:torsors}, we
collect a few basic facts about $G$-torsors, with a view towards their
descent. We circumvent the difficulty of two Galois actions (on $S_L$ and
$G_L$) as follows: Instead of working with $G_L$-torsors over $S_L$, we view
them as $G$-torsors over $S_L$ regarded as a $K$-scheme.  In
Section~\ref{sec:split}, we compare automorphisms of $T$-torsors and
$G$-torsors in the split case. Finally, in Section~\ref{sec:proof}, we apply
our descent setup to prove the main results.

\subsection*{Acknowledgments}

We learned about this question from Yuri Tschinkel. We are grateful to
the anonymous referee for several useful remarks and suggestions that
improved the paper. The first author was partly supported by grant DE
1646/4-2 of the Deutsche Forschungsgemeinschaft. Some of this work was
done while the second author was on sabbatical leave at the Riemann
Center for Geometry and Physics of Leibniz Universit\"at Hannover,
supported by Mary Immaculate College Limerick through the PLOA
program. During revisions, the first author was supported by
the Charles Simonyi Endowment at the Institute for Advanced Study.

\section{Torsors}\label{sec:torsors}

Let $X$ be a smooth projective scheme over $K$. We assume that $X$ is
connected (but not necessarily geometrically connected over $K$). Let $G$ be a linear
algebraic group over $K$.

\begin{definition}\label{def:G-torsor}
  A \emph{$G$-torsor over $X$} is a faithfully flat morphism $\GG \to X$ of
  finite type together with a right action $\GG \times_K G \to \GG$ over $X$
  such that the map $\GG \times_K G \to \GG \times_X \GG$ given by $(u,g)
  \mapsto (u,ug)$ is an isomorphism.
\end{definition}

\begin{definition}\label{def:ext_of_structure_group}
  Let $\phi: G \to H$ be a homomorphism of linear algebraic groups over
  $K$. The \emph{extension of structure group of $\GG$ along $\phi$} is
  the $H$-torsor
  \begin{equation*}
    \phi_*\GG := \GG \times^G H = (\GG \times_K H) / G \to X,
  \end{equation*}
  where $G$ acts on $\GG$ from the right, and on $H$ via $\phi$ from the left.
\end{definition}

Let $f : Y \to X$ be a morphism of connected smooth projective $K$-schemes. 

\begin{definition}\label{def:pullback}
  The \emph{pullback of $\GG$ along $f$} is the $G$-torsor
  \begin{equation*}
    f^*\GG := \GG \times_X Y \to Y.
  \end{equation*}
\end{definition}

\begin{remark}\label{rem:ext}
  Generalizing Definition~\ref{def:ext_of_structure_group}, if $\phi$ is a
  homomorphism of group schemes over $X$ from $G_X:= G\times_K X$ to
  $H_X:=H \times_K X$, then
  \begin{equation*}
    \phi_*\GG := (\GG \times_X H_X)/G_X
  \end{equation*}
  is still an $H$-torsor over $X$. Its pullback along $f$ satisfies
  \begin{equation}\label{eq:pullback_extension}
    f^*(\phi_*\GG) \cong (f^*\phi)_*(f^*\GG),
  \end{equation}
  where $f^*\phi: G_Y \to H_Y$ is the pullback of $\phi$.
\end{remark}

\begin{definition}\label{def:galois_mor}
  A morphism $f : Y \to X$ between connected smooth projective schemes over
  $K$ is a \emph{Galois covering} if $f$ is finite, surjective and \'etale such that the
  natural map
  \begin{equation*}
    \Sigma \times Y \to Y \times_X Y, \quad (\sigma,y) \mapsto (\sigma y, y)
  \end{equation*}
  is an isomorphism for the discrete group $\Sigma := \Aut_X Y$ \cite[\S 6.2B]{MR1045822}.
\end{definition}

Here, each $\sigma \in \Sigma$ is a $K$-morphism $\sigma : Y \to Y$. Hence any
$G$-torsor $\GG$ over $Y$ can be pulled back along $\sigma$, resulting in
another $G$-torsor $\sigma^*\GG$ over $Y$.

Suppose that $\GG = f^*\GG^\circ$ for some $G$-torsor $\GG^\circ$ over
$X$. For each $\sigma \in \Sigma$, let $c_\sigma : \sigma^*\GG \to \GG$ be the
canonical isomorphism
$\sigma^*\GG = \sigma^*f^*\GG^\circ \to f^*\GG^\circ = \GG$ of $G$-torsors
over $Y$ coming from the fact that $f \circ \sigma = f : Y \to X$. By
construction, they satisfy the cocycle condition
\begin{equation}\label{eq:cocycle}
  c_{\varrho\sigma}=c_\sigma \circ \sigma^*(c_\varrho)
\end{equation}
as isomorphisms
$(\varrho\sigma)^*\GG = \sigma^*\varrho^*\GG \to \sigma^*\GG \to \GG$ of
$G$-torsors over $Y$ for every $\varrho,\sigma \in \Sigma$.  Conversely, we
have the following descent result.

\begin{lemma}\label{lem:descent_torsors}
  Given a $G$-torsor $\GG$ over $Y$, and isomorphisms
  $c_\sigma : \sigma^*\GG \to \GG$ for $\sigma \in \Sigma$ satisfying
  \eqref{eq:cocycle} for all $\varrho,\sigma \in \Sigma$, there is a unique
  $G$-torsor $\GG^\circ$ over $X$ such that $f^*\GG^\circ$ with the induced
  cocycle is isomorphic to $\GG$ with $(c_\sigma)_{\sigma \in \Sigma}$.
\end{lemma}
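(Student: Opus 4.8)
The statement is the classical Galois-descent statement for torsors, transported from étale-sheaf language to the setting of a Galois covering $f\colon Y\to X$ with finite group $\Sigma=\Aut_X Y$. The plan is to realize $\GG^\circ$ as the quotient of $\GG$ by a twisted $\Sigma$-action determined by the data $(c_\sigma)$, and to check that the quotient is a $G$-torsor over $X$ with the required pullback property, using faithfully flat descent along $f$ (which is finite flat, hence an fppf cover).

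First I would upgrade each isomorphism $c_\sigma\colon\sigma^*\GG\to\GG$ to a genuine $K$-morphism $\widetilde\sigma\colon\GG\to\GG$ lying over $\sigma\colon Y\to Y$: unwinding $\sigma^*\GG=\GG\times_{Y,\sigma}Y$, the isomorphism $c_\sigma$ is the same datum as a $G$-equivariant morphism $\widetilde\sigma\colon\GG\to\GG$ with $f\circ(\text{structure map})\circ\widetilde\sigma$ matching $\sigma$ on the base, i.e. a lift of $\sigma$ commuting with the $G$-action. The cocycle condition \eqref{eq:cocycle} translates precisely into $\widetilde{\varrho\sigma}=\widetilde\varrho\circ\widetilde\sigma$, so $\sigma\mapsto\widetilde\sigma$ is an action of $\Sigma$ on $\GG$ by $K$-automorphisms, lifting the action on $Y$ and commuting with $G$. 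I would then set $\GG^\circ:=\GG/\Sigma$. Since $\Sigma$ is finite and $\GG\to Y$ is affine (torsors under an affine group are affine morphisms) while $Y$ is quasi-projective, this quotient exists as a scheme, and the natural map $\GG^\circ\to Y/\Sigma=X$ is well defined; the $G$-action descends because it commutes with $\Sigma$.

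The core verification is that $\GG^\circ\to X$ is a $G$-torsor and that $f^*\GG^\circ\cong\GG$ compatibly with the cocycles. For this I would use the defining isomorphism of the Galois covering, $\Sigma\times Y\xrightarrow{\sim}Y\times_X Y$, $(\sigma,y)\mapsto(\sigma y,y)$. Base-changing $\GG\to Y$ along $f$ and bookkeeping with this isomorphism shows $\GG\times_X Y\cong\coprod_{\sigma\in\Sigma}\sigma^*\GG\cong\Sigma\times\GG$, and passing this identification through the quotient identifies $f^*\GG^\circ=\GG^\circ\times_X Y$ with $\GG$, the projection being $f$-equivariant and $G$-equivariant. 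Because $f$ is faithfully flat and quasi-compact, the properties "faithfully flat of finite type" and "the map $\GG^\circ\times_K G\to\GG^\circ\times_X\GG^\circ$ is an isomorphism" can be checked after the base change $-\times_X Y$, where they reduce to the corresponding already-known statements for $\GG\to Y$. Hence $\GG^\circ$ is a $G$-torsor over $X$ with $f^*\GG^\circ\cong\GG$, and by construction the canonical cocycle on $f^*\GG^\circ$ matches $(c_\sigma)$ (both encode the $\Sigma$-action $\widetilde\sigma$).

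Finally, uniqueness: if $\GG^\circ_1,\GG^\circ_2$ both have pullbacks isomorphic to $\GG$ matching the cocycles, then over $Y$ there is a $G$-torsor isomorphism $f^*\GG^\circ_1\cong f^*\GG^\circ_2$ intertwining the two canonical $\Sigma$-actions, i.e. a $\Sigma$-equivariant isomorphism; by faithfully flat descent along $f$ (descent of morphisms, using that $G$-torsor isomorphisms form a sheaf for the fppf topology) it descends to an isomorphism $\GG^\circ_1\cong\GG^\circ_2$ over $X$, and one checks this descent is canonical and compatible with the given data. The main obstacle I anticipate is purely foundational bookkeeping rather than a deep point: carefully translating the cocycle data $(c_\sigma)$ into an honest $\Sigma$-action on the scheme $\GG$ (keeping track of the direction of composition in \eqref{eq:cocycle}), and then making the identification $\GG\times_X Y\cong\Sigma\times\GG$ fully equivariant so that effective descent applies cleanly. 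Once that is set up, everything else is a standard application of fppf descent.
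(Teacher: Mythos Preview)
Your approach is correct and is, at bottom, the same descent argument as the paper's, but the packaging differs. The paper bypasses the explicit quotient construction by observing that $\GG\to Y$ is relatively affine, hence $\GG=\Spec_Y\A$ for a quasicoherent $\OO_Y$-algebra $\A$; it then invokes descent of quasicoherent sheaves along the Galois cover (Stacks Project, Lemma~0D1V) to descend $\A$ to an $\OO_X$-algebra $\A^\circ$ and sets $\GG^\circ:=\Spec_X\A^\circ$, descending the coaction $\A\to\A\otimes_K K[G]$ by the same lemma. This is exactly your quotient $\GG/\Sigma$ viewed on the algebra side, but it buys the identification $f^*\GG^\circ\cong\GG$ and uniqueness for free from the cited descent statement, whereas your geometric route has to verify these by hand via the isomorphism $\Sigma\times Y\cong Y\times_X Y$ and fppf descent of properties. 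Either route works; the paper's is shorter because it outsources the bookkeeping you flag as the main obstacle to a single reference.
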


\begin{proof}
  The relatively affine scheme $\GG$ over $Y$ is the relative spectrum of a
  quasi-coherent $\OO_Y$-algebra $\A$, which descends to a unique quasi-coherent
  $\OO_X$-algebra $\A^\circ$ according to
  \cite[\href{https://stacks.math.columbia.edu/tag/0D1V}{Lemma~0D1V}]{stacks-project}.
  We define $\GG^\circ$ as the relative spectrum of $\A^\circ$ over $X$.

  The group action $\GG \times_K G \to \GG$ corresponds to a morphism
  $\A \to \A \otimes_K K[G]$ of quasi-coherent $\OO_Y$-algebras, where
  $K[G]:=\Gamma(G,\OO_G)$ is the coordinate ring of $G$. Since $\sigma$ is a
  $K$-morphism, we have a canonical isomorphism
  \begin{equation*}
    \sigma^*(\A \otimes_K K[G]) = (\sigma^*\A) \otimes_K K[G],
  \end{equation*}
  which allows us to use
  \cite[\href{https://stacks.math.columbia.edu/tag/0D1V}{Lemma~0D1V}]{stacks-project}
  again to descend the given group action to a unique group action
  $\GG^\circ \times_K G \to \GG^\circ$. This turns $\GG^\circ$ into a $G$-torsor over $X$, as
  required.
\end{proof}

\section{Automorphisms of $T$-torsors and $G$-torsors}\label{sec:split}

Let $X$ be a smooth projective scheme over a field $K$.  Let
$L_1,\dots,L_n$ be line bundles over $X$ such that $\Hom(L_i,L_j)=0$
for $i \ne j$, and hence in particular $L_i \not\cong L_j$ for
$i \ne j$. Then every automorphism of the vector bundle
$E:=L_1 \oplus \dots \oplus L_n$ over $X$ is of the form
$c_1 \oplus \dots \oplus c_n$ for automorphisms $c_i$ of $L_i$. This
is the special case $G = \GL_n$ of the following proposition.

\begin{proposition}\label{prop:IsomT_IsomG}
  Let $G$ be a reductive group over $K$ with split maximal torus
  $\iota: T \hookrightarrow G$ and resulting Weyl group $W_G$. Let $\TT$ be a
  $T$-torsor over $X$ satisfying the following two conditions:
  \begin{enumerate}
  \item[(i)] For every root $\alpha : T \to \Gm$ of $G$, the line bundle
    $L_\alpha$ given by the $\Gm$-torsor $\alpha_*\TT$ over $X$ has no nonzero
    global section.
  \item[(ii)] For every $1 \ne w \in W_G$, the $T$-torsor $w_*\TT$ obtained by
    extension of structure group along $w : T \to T$ is not
    isomorphic to $\TT$.
  \end{enumerate}
  Let $\GG := \iota_* \TT$ denote the $G$-torsor over $X$ obtained from $\TT$
  by extension of structure group. Viewing the total space $\TT$ as contained
  in $\GG$, every automorphism of the $G$-torsor $\GG \to X$ restricts to an
  automorphism of the $T$-torsor $\TT \to X$.
\end{proposition}

\begin{proof}
  Since $T$ is split over $K$, the assumptions (i) and (ii) still hold after
  base change to the algebraic closure $\Kbar$ of $K$. This allows us to
  assume $K=\Kbar$ without loss of generality. Working on each connected
  component of $X$ individually, we may also assume that $X$ is connected.

  We consider the natural map
  \begin{equation*}
    T=\Aut\TT \to \Aut\GG.
  \end{equation*}
  Here the group scheme of global automorphisms
  $\Aut\GG$ over $K$ is by definition the Weil restriction
  $\Res_{X/K}(\Aut_X\GG)$ of the group scheme of local automorphisms
  \begin{equation*}
    \Aut_X\GG := \GG \times^{G,\,\Ad} G
  \end{equation*}
  over $X$, where $G$ acts via the homomorphism
  \begin{equation*}
    \Ad : G \to \Aut G,\quad g \mapsto \conj_g := g\cdot \_ \cdot g^{-1},
  \end{equation*}
  and $\Aut\TT$ is defined similarly.  These Weil restrictions exist
  as linear algebraic group schemes over $K$ according to \cite[\S
  1.4]{MR0263826}.

  Let $K[\epsilon]$ be the ring of dual numbers, with $\epsilon^2=0$.
  Since our map
  \begin{equation*}
    T=\Aut\TT \to \Aut\GG
  \end{equation*}
  is injective on $K$-points and on $K[\epsilon]$-points, it is a
  closed immersion by \cite[Expos\'e VI\textsubscript{B},
  Corollaire~1.4.2]{SGA3}. We have to prove that it is an isomorphism.

  Let $\frakg$ denote the Lie algebra of $G$. Let $G_{K[\epsilon]}$ be
  the base change of $G$ from $K$ to $K[\epsilon]$.  Its Weil
  restriction
  \begin{equation*}
    G[\epsilon]:=\Res_{K[\epsilon]/K}(G_{K[\epsilon]})
  \end{equation*}
  is a smooth linear algebraic group and appears in the natural short
  exact sequence
  \begin{equation*}
    0 \to \frakg \to G[\epsilon] \to G \to 1
  \end{equation*}
  by \cite[II, \S4, Th\'eor\`eme~3.5]{MR0302656}.
  This induces an exact sequence 
  \begin{equation*}
    0 \to \GG \times^{G} \frakg \to \GG \times^{G} G[\epsilon]
    \to \Aut_X\GG \to 1
  \end{equation*}
  of associated group schemes over $X$. Since the Lie algebra
  $\Lie(\Aut\GG)$ of $\Aut\GG$ consists of its $K[\epsilon]$-valued
  points (i.e., sections from $X$ into $\GG \times^{G} G[\epsilon]$)
  that reduce to the identity modulo $\epsilon$, we deduce
  \begin{equation*}
    \Lie(\Aut\GG) = H^0( X, \GG \times^{G} \frakg).
  \end{equation*}
  Since $\GG = \iota_* \TT$, we thus obtain
  \begin{equation*}
    \Lie(\Aut\GG) = H^0( X, \TT \times^T \frakg).
  \end{equation*}

  Let
  \begin{equation*}
    \frakg = \frakg_0 \oplus \bigoplus_{\alpha} \frakg_{\alpha}
  \end{equation*}
  be the root space decomposition of $\frakg$ into its eigenspaces
  under $T$.  For any root $\alpha$, the line bundle
  \begin{equation*}
    \TT \times^T \frakg_{\alpha} \cong L_{\alpha}
  \end{equation*}
  has no nonzero global sections by assumption (i).  Therefore,
  \begin{equation*}
    H^0( X, \TT \times^T \frakg) = H^0( X, \TT \times^T \frakg_0)
      = H^0( X, \OO_{X} \otimes \frakg_0) = \frakg_0 = \frakt,
  \end{equation*}
  the Lie algebra of $T$. Since
  $\dim \Aut\GG \ge \dim T = \dim\frakt$, it follows that $\Aut\GG$ is
  smooth with tangent spaces $\frakt$.  This shows that our closed
  immersion $T \hookrightarrow \Aut\GG$ is an open embedding.

  Being also connected, $T$ is the connected component of the identity
  in $\Aut\GG$. In particular, $T$ is normal in $\Aut\GG$.  We view
  elements of $\Aut \GG$ as global sections of 
  \begin{equation*}
    \Aut_X\GG = \GG \times^{G,\,\Ad} G = \TT \times^{T,\,\Ad} G
    \longrightarrow X.
  \end{equation*}
  Then the elements of $T \subset \Aut\GG$ become the constant global sections
  of
  \begin{equation*}
    X \times T = \TT \times^{T,\,\Ad} T \subset \TT \times^{T,\,\Ad} G.
  \end{equation*}

  Let $\TT_x$ be the fiber of $\TT$ over a point $x \in X$. Since each global
  section $s \in \Aut\GG$ normalizes the constant sections $T$, its value
  $s(x) \in \TT_x \times^{T,\,\Ad} G$ normalizes $T=\TT_x \times^{T,\,\Ad} T$. Hence $s(x) \in
  \TT_x \times^{T,\,\Ad} N$ for the normalizer $N$ of $T$ in $G$, and therefore, $s$
  is a global section of
  \begin{equation*}
    \TT \times^{T,\,\Ad} N \subset \TT \times^{T,\,\Ad} G.
  \end{equation*}

  Let $p : N \to N/T=W_G$ be the canonical projection. Since $T$ acts
  trivially on $W_G$, the projection $p$ induces a map
  \begin{equation*}
    \TT \times^{T,\,\Ad} N \to X \times W_G
  \end{equation*}
  of group schemes over $X$. Here the image of $s$ is a global section of
  $X \times W_G$, which is automatically a constant $w \in W_G$ since $W_G$ is
  discrete. Therefore, $s$ is now a global section of the open subscheme
  \begin{equation*}
    \TT \times^{T,\,\Ad} p^{-1}(w) \subset \TT \times^{T,\,\Ad} N.
  \end{equation*}
  Hence $s : \TT \times^T G \to \TT \times^T G$ restricts to an isomorphism
  from the closed subscheme  $\TT \times^T T \subset \TT \times^T G$ to the
  closed subscheme $\TT \times^T p^{-1}(w) \subset \TT \times^T G$, where now
  $T$ acts on the $p^{-1}(w) \subset G$ once more by multiplication from the left. This
  restriction becomes an isomorphism of $T$-torsors when we let $T$ act on
  $p^{-1}(w)$ by multiplication from the right.

  Choose an element $n \in N(K)$ with $p(n) = w$, and define
  $\phi : T \to p^{-1}(w)$ by $\phi(t) = nt$. Then $\phi$ intertwines the two
  $T$-actions by multiplication from the right. Moreover, the relation
  $\phi(n^{-1}tnt')=t\phi(t')$ for $t,t' \in T$ shows that $\phi$ also
  intertwines the two $T$-actions where $t \in T$ acts on $T$ as
  multiplication by $\conj_{n^{-1}}(t)$ from the left, and on
  $p^{-1}(w)$ as multiplication by $t$ from the left. Hence $\phi$ induces an
  isomorphism of $T$-torsors
  \begin{equation*}
    (w^{-1})_*\TT = (\conj_{n^{-1}})_*\TT = \TT \times^{T,\,\conj_{n^{-1}}} T \to \TT \times^T p^{-1}(w).
  \end{equation*}
  As the latter is isomorphic to $\TT$ via $s$, we conclude that
  $(w^{-1})_* \TT$ is isomorphic to $\TT$.  Using assumption (ii), we deduce
  that $w=1$.

  Hence every automorphism of $\GG$ is a global section of
  $\TT \times^{T,\,\Ad} T = X \times T$, and thus an element of $T$.
\end{proof}

Now we assume that $S$ is a \emph{split} del Pezzo surface over $K$. Let $T$
be a split torus over $K$ with character lattice $\Lambda_T$, and let $\TT$ be
a $T$-torsor over $S$.  Recall from \cite{MR89f:11082} that the \emph{type} of
$\TT$ is the homomorphism $\tau: \Lambda_T \to \Lambda_S$ that sends a
character $\chi : T \to \Gm$ to the class $[\chi_*\TT]$ of the line bundle
given by the $\Gm$-torsor $\chi_*\TT$ over $S$. For a homomorphism
$\phi : T \to T'$ of split $K$-tori, it is easy to check that the extension of structure group
$\phi_*\TT$ has type
$\tau \circ \phi^*: \Lambda_{T'} \to \Lambda_T \to \Lambda_S$.

Let $G$ be a split reductive group over $K$ with maximal torus
$\iota: T \hookrightarrow G$ and resulting reduced root datum
$(\Phi_G \subset \Lambda_T,\Phi_G^\vee \subset \Lambda_T^\vee)$. We assume
that the type $\tau: \Lambda_T \to \Lambda_S$ of $\TT$ is an isomorphism from
this root datum to
$(\Phi_S \subset \Lambda_S,\Phi_S^\vee \subset \Lambda_S^\vee)$, i.e., $\tau$
is bijective, $\tau(\Phi_G)=\Phi_S$, and $\tau^*(\Phi_S^\vee)=\Phi_G^\vee$.
This determines $G$ up to isomorphism \cite[Expos\'e XXV,
Corollaire~1.2]{SGA3}. The assumption that $\tau$ is bijective also means
that $\TT$ is a universal torsor.

\begin{remark}\label{rem}
  This group $G$ can be described more explicitly as follows (see also
  \cite[\S 2]{MR1941576}).  The anticanonical class
  $-K_S \in \Lambda_S$ vanishes on all coroots and hence defines a
  character
  \begin{equation*}
    \chi := \tau^{-1}(-K_S) : G \to \Gm.
  \end{equation*}
  If the degree of $S$ is at most $6$, then the subgroup
  $\{\phi \in \Lambda_S^\vee \mid \phi(-K_S)=0\}$ is generated by the coroots
  $\alpha^\vee \in \Lambda_S^\vee$, so we obtain a short
  exact sequence
  \begin{equation*}
    1 \to [G,G] \to G \xrightarrow{\chi} \Gm \to 1,
  \end{equation*}
  in which $\ker(\chi)$ is simply connected. Therefore, the commutator
  subgroup $[G,G]$ is the semisimple and simply connected algebraic
  group of type $\mathbf{E}_8$, $\mathbf{E}_7$, $\mathbf{E}_6$,
  $\mathbf{D}_5$, $\mathbf{A}_4$, $\mathbf{A}_2+\mathbf{A}_1$ if the
  degree of $S$ is $1, 2, 3, 4, 5, 6$, respectively.

  In degree $7$, we have $G \cong \Gm \times \GL_2$. In degree
  $8$, we have $G \cong \GL_2$ if $S \cong \PP^1 \times \PP^1$,
  and $G \cong \Gm^2$ if $S$ is the blow-up of $\PP^2$ in a
  point. Finally, in degree $9$, we have $G \cong \Gm$.
\end{remark}

Let $\GG := \iota_* \TT$ denote the $G$-torsor over $S$ obtained from $\TT$ by
extension of structure group.  We can view the total space $\TT$ as contained
in $\GG$.

\begin{cor}\label{cor:IsomT_IsomG}
  Let $\TT_1$ be another $T$-torsor over $S$ of the same type $\tau$ as $\TT$, and 
  $\GG_1=\iota_*\TT_1$. Every isomorphism $\GG_1\to\GG$ of $G$-torsors over $S$
  restricts to an isomorphism $\TT_1\to\TT$ of $T$-torsors over $S$.
\end{cor}

\begin{proof}
  We can choose an isomorphism $\TT \cong \TT_1$ since both are torsors of the
  same type $\tau$ under the split torus $T$. Hence it suffices to verify that $\TT$
  satisfies the assumptions of Proposition~\ref{prop:IsomT_IsomG}.

  Indeed, for any root $\alpha \in \Phi_G$, the line bundle $L_\alpha$ over
  $S$ has as isomorphism class $\tau(\alpha)$ since $\TT$ has type $\tau$. In
  particular, the anticanonical degree of $L_{\alpha}$ is
  $(-K_{S}, \tau(\alpha)) = 0$, and $L_{\alpha} \not\cong \OO_{S}$.
  Hence $H^0( S, L_{\alpha}) = 0$ for each root $\alpha$.

  Furthermore, as $\TT$ has type $\tau$, the type of $w_*\TT$ is
  $\tau \circ w^* : \Lambda_T \to \Lambda_T \to \Lambda_S$. If $w \ne 1$, then
  $\tau \ne \tau \circ w^*$, and hence $\TT \not\cong w_*\TT$.
\end{proof}

\section{Proof of the main results}\label{sec:proof}

We use the notation of Section~\ref{sec:intro}. In particular, $S$ and $S_L$ are
both connected smooth projective schemes over $K$ (even though $S_L$ is not
geometrically connected over $K$ unless $L=K$), and the projection $\pi: S_L \to S$ is a
$K$-morphism. More precisely, $\pi$ is a Galois covering with group
\begin{equation*}
  \Sigma = \{\id_S \times \gamma^* \mid \gamma \in \Gamma\} \cong \Gamma^\op
\end{equation*}
in the sense of Definition~\ref{def:galois_mor}.

Let $\GG$ be a $G$-torsor over the $K$-scheme $S_L$, as in
Definition~\ref{def:G-torsor}. Then the same total space $\GG$ can also be
viewed as a $G_L$-torsor over the $L$-scheme $S_L$. Indeed, the required
action $\GG \times_L G_L \to \GG$ of $G_L$ comes from the given action
$\GG \times_K G \to \GG$ of $G$, since the two fiber products are canonically
isomorphic and the axioms are clearly equivalent. Conversely,
every $G_L$-torsor over the $L$-scheme $S_L$ can be viewed as a $G$-torsor
over the $K$-scheme $S_L$.

Let $T$ be a $K$-torus that splits over $L$ with character lattice $\Lambda_T$,
and let $\TT$ be a $T$-torsor over $S_L$. Then we can in particular view $\TT$
also as a $T_L$-torsor over $S_L$, which has a type $\tau : \Lambda_T \to
\Lambda_S$ in the sense of Section~\ref{sec:split}.

\begin{lemma}\label{lem:pullback_type}
  For $\gamma \in \Gamma$, put $\sigma = \id_S \times \gamma^* \in \Sigma$.
  Then the $T$-torsor $\sigma^*\TT$ over $S_L$ has type
  \begin{equation*}
    \sigma^* \circ \tau \circ (\gamma^T_*)^{-1}:
    \Lambda_T \to \Lambda_S,
  \end{equation*}
  where $\gamma^T_* : \Lambda_T \to \Lambda_T$ is induced by $\gamma$.
\end{lemma}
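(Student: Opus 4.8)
The plan is to unwind the definition of type and combine it with the compatibility of extension of structure group with pullback from Remark~\ref{rem:ext}. Recall that, viewing $\sigma^*\TT$ as a $T_L$-torsor over the $L$-scheme $S_L$ (via the identification at the beginning of this section), its type sends a character $\chi \in \Lambda_T = \Hom(T_L,\GmL)$ to the class of the $\Gm$-torsor $\chi_*(\sigma^*\TT)$ in $\Pic S_L = \Lambda_S$, and similarly $\tau(\psi) = [\psi_*\TT]$ for every $\psi \in \Lambda_T$. So I must show $[\chi_*(\sigma^*\TT)] = \sigma^*(\tau((\gamma^T_*)^{-1}(\chi)))$ for all $\chi \in \Lambda_T$.

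The one step that requires genuine care is the interaction of $\sigma$ with the characters of $T_L$. A character $\psi \in \Lambda_T$ base-changes along the structure morphism $p\colon S_L \to \Spec L$ to a homomorphism $\psi_{S_L}\colon T\times_K S_L \to \Gm\times_K S_L$ of group schemes over $S_L$. The claim I would establish is that the pullback of $\psi_{S_L}$ along $\sigma = \id_S \times \gamma^*$ is again of this form, namely equal to $(\gamma^T_*\psi)_{S_L}$. This rests on the relation $p\circ\sigma = \gamma^*\circ p$ together with the fact that the $\Gamma$-action on $\Lambda_T$ coming from the $K$-form $T$ of $T_L$ is, by definition, induced by pulling characters of $T_L$ back along the automorphisms $\gamma^*$ of $\Spec L$; the factor $\id_S$ of $\sigma$ contributes nothing since $T$ is already defined over $K$. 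Pinning down this identification, and in particular getting $\gamma^T_*$ rather than its inverse, is where one has to be careful with the two roles played by $L$ (base field of $S_L$ versus field of definition of the splitting of $T$) and with the anti-isomorphism $\Sigma \cong \Gamma^\op$; this is the main obstacle.

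Granting this, set $\psi := (\gamma^T_*)^{-1}(\chi) \in \Lambda_T$, so that $\sigma^*\psi = \gamma^T_*\psi = \chi$ under the identification above. Applying \eqref{eq:pullback_extension} to $f = \sigma$ and to $\phi = \psi$ (regarded as the homomorphism $\psi_{S_L}$ of group schemes over $S_L$) gives
\begin{equation*}
  \chi_*(\sigma^*\TT) \cong (\sigma^*\psi)_*(\sigma^*\TT) \cong \sigma^*(\psi_*\TT)
\end{equation*}
as $\Gm$-torsors over $S_L$. Passing to classes in $\Pic S_L = \Lambda_S$ yields
\begin{equation*}
  [\chi_*(\sigma^*\TT)] = \sigma^*\bigl([\psi_*\TT]\bigr) = \sigma^*\bigl(\tau(\psi)\bigr) = \sigma^*\bigl(\tau((\gamma^T_*)^{-1}(\chi))\bigr),
\end{equation*}
which is the asserted formula for the type of $\sigma^*\TT$. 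Everything here beyond the identification of $\sigma^*\psi$ with $\gamma^T_*\psi$ in the previous paragraph is a direct application of the definition of type and of \eqref{eq:pullback_extension}.
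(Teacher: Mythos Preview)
Your proof is correct and follows essentially the same route as the paper: both arguments reduce to the compatibility \eqref{eq:pullback_extension} together with the identification $\sigma^*\psi = \gamma^T_*\psi$ for characters $\psi \in \Lambda_T$. The paper simply parametrizes $\Lambda_T$ by $\gamma^T_*\chi$ rather than by $\chi$, and leaves implicit the step you spell out carefully (that $\sigma^*$ acts on characters via $\gamma^T_*$).
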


\begin{proof}
  Every element of $\Lambda_T$ has the form $\gamma^T_*\chi$ for a unique
  $\chi \in \Lambda_T$. The type of $\sigma^*\TT$ sends $\gamma^T_*\chi$ to
  the class of the line bundle
  \begin{equation*}
    (\gamma^T_*\chi)_*(\sigma^*\TT) \cong \sigma^*(\chi_*\TT)
  \end{equation*}
  over $S_L$, where the isomorphy is a special case of
  \eqref{eq:pullback_extension}. This class is $(\sigma^*\circ\tau)(\chi)$, as
  required.
\end{proof}

\begin{theorem}\label{thm:main_category}
  Let $\iota : T \hookrightarrow G$ be a maximal torus in a reductive group
  over $K$ such that $T$ splits over $L$, with resulting $\Gamma$-twisted root datum
  \begin{equation*}
    (\Phi_G \subset \Lambda_T,\Phi_G^\vee \subset \Lambda_T^\vee,
    \Gamma \times \Lambda_T \to \Lambda_T).
  \end{equation*}
  Let $\TT$ be a $T$-torsor over $S_L$ whose type
  $\tau: \Lambda_T \to \Lambda_S$ is an 
  isomorphism to the $\Gamma$-twisted root datum
  \begin{equation*}
    (\Phi_S \subset \Lambda_S,\Phi_S^\vee \subset \Lambda_S^\vee,
    \Gamma \times \Lambda_S \to \Lambda_S)
  \end{equation*}
  given by $S$.
  We consider the $G$-torsor $\GG := \iota_* \TT$ over $S_L$ obtained
  by extension of structure group.

  Then the functor $\iota_*$ from the groupoid of $T$-torsors $\TT^\circ$ over
  $S$ such that $\TT^\circ_L \cong \TT$ to the groupoid of $G$-torsors
  $\GG^\circ$ over $S$ such that $\GG^\circ_L \cong \GG$ is an equivalence of
  categories.
\end{theorem}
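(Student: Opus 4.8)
The plan is to prove the equivalence by exhibiting compatible descent data and invoking the automorphism comparison of Section~\ref{sec:split}. Recall from Section~\ref{sec:proof} that giving a $T$-torsor $\TT^\circ$ over $S$ with $\TT^\circ_L \cong \TT$ is, by Lemma~\ref{lem:descent_torsors}, the same as giving a family of isomorphisms $d_\sigma : \sigma^*\TT \to \TT$ of $T$-torsors over $S_L$ satisfying the cocycle condition~\eqref{eq:cocycle}; similarly for $G$-torsors. So the first step is to observe that $\iota_*$ induces, for each $\sigma \in \Sigma$, a map from the set of $T$-torsor isomorphisms $\sigma^*\TT \to \TT$ to the set of $G$-torsor isomorphisms $\sigma^*\GG = \sigma^*\iota_*\TT \cong \iota_*\sigma^*\TT \to \iota_*\TT = \GG$ (using~\eqref{eq:pullback_extension}), and that this map is compatible with composition and with the cocycle condition. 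Hence $\iota_*$ sends descent data to descent data and is functorial in the obvious way.

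Next I would address essential surjectivity and full faithfulness on descent data. The key input is that a $T$-torsor isomorphism $\sigma^*\TT \to \TT$ exists: by Lemma~\ref{lem:pullback_type} the type of $\sigma^*\TT$ is $\sigma^* \circ \tau \circ (\gamma^T_*)^{-1}$, and since $\tau$ is assumed $\Gamma$-equivariant this equals $\tau$ again (one checks $\sigma^*$ on $\Lambda_S = \Pic S_L$ agrees with the Galois action and cancels $\gamma^T_*$ against it through $\tau$), so $\sigma^*\TT$ and $\TT$ are split torsors of the same type and hence isomorphic. Now apply Proposition~\ref{prop:IsomT_IsomG} over the \emph{split} del Pezzo surface $S_L$: every isomorphism $\sigma^*\GG \to \GG$ of $G_L$-torsors restricts to an isomorphism $\sigma^*\TT \to \TT$ of $T_L$-torsors, and conversely $\iota_*$ of a $T_L$-torsor isomorphism is a $G_L$-torsor isomorphism. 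Thus $\iota_*$ is a \emph{bijection} between $T$-torsor isomorphisms $\sigma^*\TT \to \TT$ and $G$-torsor isomorphisms $\sigma^*\GG \to \GG$, for every $\sigma$. Consequently it is a bijection on cocycles, giving essential surjectivity of the induced functor on descended torsors, and it is a bijection on morphisms of descent data (a morphism of descended torsors is an isomorphism $\TT \to \TT$ over $S_L$ commuting with the $d_\sigma$, and again Proposition~\ref{prop:IsomT_IsomG} identifies these with the corresponding $G$-torsor morphisms), giving full faithfulness.

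The main obstacle is the bookkeeping around the two roles of $\Sigma$ and the coherence isomorphisms. Concretely: one must verify that the canonical isomorphism $\sigma^*\iota_*\TT \cong \iota_*\sigma^*\TT$ from~\eqref{eq:pullback_extension} intertwines the map $T$-isom $\mapsto$ $G$-isom with composition, so that the cocycle condition is genuinely preserved and not merely preserved up to a twist; this is where one pays for working with $G$-torsors over the $K$-scheme $S_L$ rather than $G_L$-torsors over the $L$-scheme. A secondary point requiring care is that Proposition~\ref{prop:IsomT_IsomG} is stated over a split del Pezzo surface with a \emph{split} torus $T$: here $T$ is only a $K$-torus split over $L$, but viewed over $S_L$ as in Section~\ref{sec:proof} it becomes the split torus $T_L$, and the proposition applies verbatim to $\TT$ and $\GG$ regarded as $T_L$- and $G_L$-torsors. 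Once these compatibilities are nailed down, the equivalence follows formally from descent (Lemma~\ref{lem:descent_torsors}) plus the automorphism comparison.
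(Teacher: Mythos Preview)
Your proposal is correct and follows essentially the same route as the paper: both use Lemma~\ref{lem:pullback_type} plus $\Gamma$-equivariance of $\tau$ to see that $\sigma^*\TT$ has type $\tau$, then invoke Proposition~\ref{prop:IsomT_IsomG} over the split surface $S_L$ to match $T$- and $G$-isomorphisms, and finally appeal to Lemma~\ref{lem:descent_torsors}. The only cosmetic difference is that the paper argues full faithfulness directly with a morphism $\psi$ over $S$ (base-changing, restricting via Proposition~\ref{prop:IsomT_IsomG}, then observing the restriction is already defined over $S$), whereas you phrase it uniformly through morphisms of descent data; the content is the same.
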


\begin{proof}
  Let $\TT^\circ$ be a $T$-torsor over $S$ such that $\TT^\circ_L \cong \TT$.
  Let $\GG^\circ:= \iota_* \TT^\circ$. Then the base change of $\GG^\circ$ to
  $L$ is
  \begin{equation*}
    \GG^\circ_L =(\iota_*\TT^\circ)_L\cong\iota_*(\TT^\circ_L)\cong\iota_*\TT = \GG.
  \end{equation*}
  Clearly any isomorphism $\phi$ between such $T$-torsors over $S$ induces an
  isomorphism $\iota_*\phi$ between $G$-torsors over $S$. Hence $\iota_*$ is
  indeed a functor.

  Let $\TT^\bullet$ be another $T$-torsor over $S$ such that
  $\TT^\bullet_L \cong \TT$, and $\GG^\bullet := \iota_*\TT^\bullet$. Let
  $\psi : \GG^\bullet \to \GG^\circ$ be an isomorphism of $G$-torsors over
  $S$.  Then Corollary~\ref{cor:IsomT_IsomG} applies to the base change
  \begin{equation*}
    \psi_L: \iota_*(\TT^\bullet_L) = \GG^\bullet_L \to \GG^\circ_L = \iota_*(\TT^\circ_L)
  \end{equation*}
  and shows that $\psi_L$ restricts to an isomorphism
  $\TT^\bullet_L \to \TT^\circ_L$.  In particular,
  $\psi_L(\TT^\bullet_L)=\TT^\circ_L$ as closed subschemes of $\GG^\circ_L$,
  and hence $\psi(\TT^\bullet)=\TT^\circ$ as closed subschemes of $\GG^\circ$.
  The restriction $\TT^\bullet \to \TT^\circ$ of $\psi$ becomes an isomorphism
  of $T$-torsors over $S_L$ after base change, and therefore is an isomorphism
  of $T$-torsors over $S$. This proves that the functor $\iota_*$ is fully
  faithful.

  Now assume that the $G$-torsor $\GG = \iota_*\TT$ descends to a $G$-torsor
  $\GG^\circ$ over $S$. As explained in Section~\ref{sec:torsors}, we have an
  isomorphism
  \begin{equation*}
    c_\sigma: \sigma^*\GG \to \GG
  \end{equation*}
  of $G$-torsors over $S_L$ for each $\sigma \in \Sigma$, satisfying the
  cocycle conditions \eqref{eq:cocycle}.  Here, we have
  $\sigma^*\GG = \iota_*\sigma^*\TT$. Since $\tau$ is $\Gamma$-equivariant,
  $\sigma^*\TT$ has the same type $\tau$ as $\TT$ due to
  Lemma~\ref{lem:pullback_type}. Therefore, Corollary~\ref{cor:IsomT_IsomG}
  shows that $c_\sigma$ restricts to an isomorphism
  \begin{equation*}
    c_\sigma': \sigma^*\TT \to \TT.
  \end{equation*}
  The $c_\sigma'$ satisfy the cocycle condition because the $c_\sigma$
  do. Therefore, $\TT$ descends to a $T$-torsor $\TT^\circ$ over $S$. By
  construction, $\iota_*\TT^\circ \cong \GG^\circ$. This proves that the
  functor $\iota_*$ is essentially surjective.
\end{proof}

\begin{cor}\label{cor:main_split}
  Let $\iota: T \hookrightarrow G$ be a split maximal torus in a split
  reductive group over $K$, with resulting root datum
  $(\Phi_G \subset \Lambda_T,\Phi_G^\vee \subset \Lambda_T^\vee)$.
  Let $\TT$ be a $T$-torsor over $S_L$ whose type $\tau: \Lambda_T \to \Lambda_S$
  is a $\ZZ$-linear isomorphism from this root datum to
  $(\Phi_S \subset \Lambda_S,\Phi_S^\vee \subset \Lambda_S^\vee)$.  We
  consider the $G$-torsor $\GG := \iota_* \TT$ over $S_L$ obtained by
  extension of structure group.

  Then the groupoid of $G$-torsors $\GG^\circ$ over $S$ such that
  $\GG^\circ_L \cong \GG$ is equivalent to the
  groupoid of universal torsors over $S$.
\end{cor}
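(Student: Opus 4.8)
The plan is to deduce this corollary from Theorem~\ref{thm:main_category} by replacing the split torus $\iota : T \hookrightarrow G$ by the N\'eron--Severi torus $T'$ for $S$, embedded as a maximal torus into the \emph{same} split group $G$. The crucial input is the theorem of Gille \cite{MR2139505} and Raghunathan \cite{MR2125504} that a split reductive group over $K$ contains, as a maximal torus, any $K$-torus that over $\bar K$ becomes isomorphic to $T$ with Galois action on the character lattice factoring through the Weyl group $W_G$. So first I would check that $T'$ is of this kind. Since $\tau$ is a $\ZZ$-linear isomorphism of root data, it conjugates $W_G$ onto $W_S$; and the Galois action on $\Lambda_S = \Pic S_L$ permutes the classes of $(-1)$-curves preserving their intersection numbers, hence factors through the symmetry group $W_S$ of that configuration. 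Transporting this action along $\tau^{-1}$ to $\Lambda_T$, we obtain a homomorphism $\Gamma \to W_G$, so Gille--Raghunathan provides an embedding $\iota' : T' \hookrightarrow G$ as a maximal torus together with a $\ZZ$-linear isomorphism $\Lambda_{T'} \to \Lambda_T$ carrying the roots of $(T',G)$ to $\Phi_G$ and the $\Gamma$-action to the transported one. Composing with $\tau$ yields a $\Gamma$-equivariant, $\ZZ$-linear isomorphism of root data $\tau' : \Lambda_{T'} \to \Lambda_S$; in particular $\Lambda_{T'} \cong \Lambda_S$ as $\Gamma$-modules, so $T'$ is the N\'eron--Severi torus for $S$.

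Next I would let $\TT'$ be a $T'_L$-torsor over $S_L$ of type $\tau'$ (it exists and is unique up to isomorphism), and apply Theorem~\ref{thm:main_category} to $\iota' : T' \hookrightarrow G$ and $\TT'$. This gives an equivalence, via $\iota'_*$, from the groupoid of $T'$-torsors $\TT^\circ$ over $S$ with $\TT^\circ_L \cong \TT'$ to the groupoid of $G$-torsors $\GG^\circ$ over $S$ with $\GG^\circ_L \cong \iota'_*\TT'$. The first of these groupoids is exactly the groupoid of universal torsors over $S$: under the $\Gamma$-equivariant identification $\tau' : \Lambda_{T'} \cong \Lambda_S$ the torsor $\TT'$ becomes a torsor of type the identity, hence a universal torsor over $S_L$; since universal torsors over $S_L$ are unique up to isomorphism, a $T'$-torsor over $S$ restricts to $\TT'$ over $L$ precisely when it is a universal torsor over $S$.

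It remains to identify the second groupoid, and this is the step that needs the most care: I must show $\iota'_*\TT' \cong \GG = \iota_*\TT$ as $G$-torsors over $S_L$. Over the field $L$ the group $G_L$ is split and $T_L, T'_L$ are split maximal tori whose root data correspond under the root-datum isomorphism $\tau^{-1} \circ \tau' : \Lambda_{T'} \to \Lambda_T$ compatibly with the inclusions into the root datum of $G_L$. The conjugations $T_L \to T'_L$ inside $G_L$ and the root-datum isomorphisms $\Lambda_{T'} \to \Lambda_T$ compatible with $G_L$ are both torsors under $W_G$, and the natural map between them is $W_G$-equivariant, so there is $g \in G(L)$ whose conjugation $c_g : T_L \to T'_L$ induces $\tau^{-1} \circ \tau'$ on character lattices. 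Then $(c_g)_*\TT$ is a $T'_L$-torsor over $S_L$ of type $\tau \circ (c_g)^* = \tau'$, hence $(c_g)_*\TT \cong \TT'$, and
\[
  \iota'_*\TT' \cong \iota'_*(c_g)_*\TT = (\iota' \circ c_g)_*\TT = (\conj_g)_*(\iota_L)_*\TT = (\conj_g)_*\GG \cong \GG,
\]
the last isomorphism because extension of structure group along the inner automorphism $\conj_g$ of $G_L$ preserves the isomorphism class of any torsor. Hence the second groupoid above is the groupoid of $G$-torsors $\GG^\circ$ over $S$ with $\GG^\circ_L \cong \GG$, and the claimed equivalence of categories follows.

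I expect the main obstacle to be the correct invocation of the Gille--Raghunathan result, together with the bookkeeping needed to extract from it a \emph{$\Gamma$-equivariant} isomorphism $\tau'$ of root data whose underlying lattice map identifies $T'$ with the N\'eron--Severi torus; by comparison, the identification $\iota'_*\TT' \cong \GG$ in the last paragraph, while requiring attention to several torus identifications, is formal once one knows that split maximal tori of $G_L$ are conjugate and that inner twists of torsors are trivial.
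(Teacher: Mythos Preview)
Your proposal is correct and follows essentially the same route as the paper: invoke Gille--Raghunathan to realize the N\'eron--Severi torus as a maximal torus $T' \subset G$, apply Theorem~\ref{thm:main_category} to $(T',G)$, and use that inner automorphisms preserve isomorphism classes of torsors to identify $\iota'_*\TT'$ with $\GG$. The paper streamlines this by applying Gille--Raghunathan to the semisimple commutator subgroup $[G,G]$ to obtain an explicit $g \in G(L)$ up front, then defining $T'_L := gT_Lg^{-1}$ and $\TT' := (\conj_g)_*\TT$ directly from it, so that the element $g$ you search for in your final paragraph is already in hand and the verification that $\tau' = \tau \circ \conj_g^*$ is $\Gamma$-equivariant becomes a concrete diagram chase.
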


\begin{proof}
  Let $N \subset G$ denote the normalizer of $T$ in $G$. The exact sequence
  \begin{equation*}
    1 \to T \to N \to W_G \to 1
  \end{equation*}
  defines the Weyl group $W_G$ of $G$. For $g \in G(L)$, we denote by
  \begin{equation*}
    \conj_g := g\cdot\_\cdot g^{-1}:  G_L \to G_L
  \end{equation*}
  the conjugation. For $n \in N(L)$, $\conj_n$ restricts to an automorphism of
  $T_L$, which depends only on the image of $n$ in $W_G$. This defines a left
  action of $W_G$ on $T_L$, and consequently a right action of $W_G$ on
  $\Lambda_T$, where $w \in W_G$ acts via the pullback
  $w^*: \Lambda_T \to \Lambda_T$ of characters. Sending $w \in W_G$ to
  $(w^{-1})^* : \Lambda_T \to \Lambda_T$, we identify $W_G$ with the Weyl
  group $W(\Phi_G) \subset \Aut \Lambda_T$ of our root datum of $G$.

  Since $\tau$ is an isomorphism of root data, it induces an isomorphism
  \begin{equation*}
    \tau_W : W_G \to W_S,
  \end{equation*}
  where $W_S:=W(\Phi_S) \subset \Aut \Lambda_S$ is the Weyl group of our root
  datum of $S$.

  The Galois action of $\Gamma$ on $\Lambda_S$ factors through the Weyl group action
  of $W_S$ on $\Lambda_S$ via a natural map
  \begin{equation}\label{eq:rho_S}
    \rho_S: \Gamma \to W_S.
  \end{equation}

  Applying \cite[Th\'eor\`eme~5.1(b)]{MR2139505} or
  \cite[Theorem~1.1]{MR2125504} to the semisimple commutator subgroup
  $G^c:=[G,G] \subset G$, there is a $g \in G^c(L) \subset G(L)$ such that,
  for each $\gamma \in \Gamma$, the element $g^{-1}\gamma(g) \in G(L)$ is in
  $N(L)$, and $\tau_W$ maps its image in $W_G$ to $\rho_S(\gamma)$. The former
  implies that there is a unique maximal torus $\iota': T' \hookrightarrow G$
  over $K$ such that $T'_L = gT_L g^{-1}$, and the latter says that the square
  on the right in the following diagram commutes:
  \begin{equation}\label{eq:Gal-equiv}
    \begin{tikzcd}
      \Lambda_{T'} \arrow[rr, "\conj_g^*"] \arrow[d, "\gamma^{T'}_*"]
      && \Lambda_T \arrow[rr, "\tau"] \arrow[d, "\conj_{\gamma(g^{-1})g}^*"] &&
      \Lambda_S \arrow[d, "\rho_S(\gamma)"]\\
      \Lambda_{T'} \arrow[rr, "\conj_g^*"] && \Lambda_T \arrow[rr, "\tau"]
      && \Lambda_S
    \end{tikzcd}
  \end{equation}
  In order to determine the Galois action $\gamma^{T'}_*$ coming from the fact
  that $T'$ is defined over $K$, we note that the commutative diagram
  \begin{equation*}
    \begin{tikzcd}
      G_L \arrow[rr, "\gamma(g)\cdot\_"] \arrow[d, "\gamma^*"]  && G_L
      \arrow[rr, "\_\cdot\gamma(g^{-1})"]  \arrow[d, "\gamma^*"] && G_L
      \arrow[d, "\gamma^*"]\\
      G_L \arrow[rr, "g\cdot\_"] && G_L\arrow[rr, "\_\cdot g^{-1}"] && G_L
    \end{tikzcd}
  \end{equation*}
  restricts to a commutative diagram
  \begin{equation*}
    \begin{tikzcd}
      T_L \arrow[rr, "\conj_{\gamma(g)}"] \arrow[d, "\gamma^*"]  && T_L' \arrow[d, "\gamma^*"] \\
      T_L \arrow[rr, "\conj_g"] && T_L',
    \end{tikzcd}
  \end{equation*}
  which in turn induces the commutative diagram of character lattices
  \begin{equation*}
    \begin{tikzcd}
      \Lambda_T    && \Lambda_{T'} \arrow[ll, swap, "\conj_{\gamma(g)}^*"] \\
      \Lambda_T \arrow[u, swap, "\gamma^T_*"] && \Lambda_{T'}. \arrow[ll, swap, "\conj_g^*"]\arrow[u, swap, "\gamma^{T'}_*"]
    \end{tikzcd}
  \end{equation*}
  Since $T$ is split over $K$, the map $\gamma^T_*$ here is the identity, and hence
  \begin{equation*}
    \gamma^{T'}_* = \conj_{g\gamma(g^{-1})}^*.
  \end{equation*}
  This shows that the square on the left in \eqref{eq:Gal-equiv} commutes as well.
  Therefore, the composition
  \begin{equation}\label{eq:Lambda_Gal}
    \Lambda_{T'} \xrightarrow{\conj_g^*} \Lambda_T \xrightarrow{\tau} \Lambda_S
  \end{equation}
  is Galois-equivariant. In particular, $T'$ is a N\'eron--Severi torus.
  
  Now Theorem~\ref{thm:main_category} applies to
  $\iota' : T' \hookrightarrow G$, the $T'$-torsor $\TT' := (\conj_g)_*\TT$
  and the resulting $G$-torsor $\GG' := \iota'_*\TT'$ over $S_L$.  Indeed,
  the type of $\TT'$ is by construction the composition \eqref{eq:Lambda_Gal}.
  This composition is Galois-equivariant, as we have just seen. It is also an
  isomorphism of root data since $\conj_g^*$ preserves the root datum of $G$
  and $\tau$ respects the relevant root data by assumption. So the hypotheses
  of Theorem~\ref{thm:main_category} are satisfied.

  Therefore, the functor $\iota'_*$ is an equivalence from the groupoid of
  $T'$-torsors $\TT^\circ$ over $S$ such that $\TT^\circ_L \cong \TT'$ to
  the groupoid of $G$-torsors $\GG^\circ$ over $S$ such that
  $\GG^\circ_L \cong \GG'$. The former groupoid is equivalent to the
  groupoid of universal torsors over $S$ because $T'$ is a N\'eron--Severi
  torus and the type of $\TT'$ is a Galois-equivariant isomorphism. The latter
  groupoid is the groupoid in the claim because $\GG' \cong \GG$. Indeed,
  $\GG' = (\conj_g)_*\GG$ because $\iota'_L\circ\conj_g =
  \conj_g\circ\iota_L$, and $\conj_g$ is an inner automorphism of $G_L$.
\end{proof}

\bibliographystyle{plain}

\bibliography{bundles}

\end{document}